\newtheorem{theorem}{Theorem}[section]
\newtheorem*{theorem*}{Theorem}
\newtheorem{lemma}[theorem]{Lemma}
\newtheorem{corollary}[theorem]{Corollary}
\theoremstyle{definition}
\newtheorem{chunk}[theorem]{}
\theoremstyle{remark}
\newtheorem{remark}[theorem]{Remark}
\newcommand{\ZZ}{\mathbb{Z}}
\newcommand{\Spec}{\operatorname{Spec}}
\newcommand{\hh}{\operatorname{H}}
\newcommand{\RHom}{\mathsf{R}\mathrm{Hom}}
\newcommand{\m}{\mathfrak{m}}
\newcommand{\p}{\mathfrak{p}}
\newcommand{\lotimes}{\otimes^{\mathsf{L}}}
\newcommand{\V}{\operatorname{V}}
\newcommand{\llam}{\mathsf{L}\Lambda}
\newcommand{\rgam}{\mathsf{R}\Gamma}
\newcommand{\fp}{\mathfrak{p}}
\newcommand{\depth}{\operatorname{depth}}
\newcommand{\width}{\operatorname{width}}
\renewcommand{\leq}{\leqslant}
\renewcommand{\geq}{\geqslant}
\renewcommand{\sup}{\operatorname{sup}}
\renewcommand{\inf}{\operatorname{inf}}
\newcommand{\pdim}{\operatorname{proj.dim}}
\newcommand{\supp}{\operatorname{supp}}
\newcommand{\height}{\operatorname{ht}}
\newcommand{\lra}{\longrightarrow}
\numberwithin{equation}{theorem}
\title{\mbox{The Improved New Intersection Theorem revisited}}
\author[L.W.~Christensen]{Lars Winther Christensen}
\address{Texas Tech University, Lubbock, TX 79409, U.S.A.}
\email{lars.w.christensen@ttu.edu}
\email{lferraro@ttu.edu}
\author[L.~Ferraro]{Luigi Ferraro}
\thanks{L.W.C.\ was partly supported by Simons Foundation
  collaboration grant 428308.}
\date{19 October 2022}
\keywords{Improved New Intersection Theorem, complex of maximal depth, derived complete complex}
\subjclass[2020]{13D22; 13D45}
\begin{document}

\maketitle

\begin{abstract}
  We prove a generalized version of Evans and Griffith's Improved New
  Intersection Theorem: Let $I$ be an ideal in a local ring $R$. If a
  finite free $R$-complex, concentrated in nonnegative degrees, has
  $I$-torsion homology in positive degrees, and the homology in degree
  0 has an $I$-torsion minimal generator, then the length of the
  complex is at least $\dim R-\dim R/I$. This improves the bound
  $\height I$ obtained by Avramov, Iyengar, and Neeman in 2018.
\end{abstract}

\section*{Introduction}

\noindent
In its various forms, the The New Intersection Theorem is concerned
with the length of a finite free complex, that is, a complex
\begin{equation*}
  F \colon 0 \lra F_n \lra \cdots \lra F_1 \lra F_0 \lra 0
\end{equation*}
of finitely generated free modules, over a local ring $(R,\m)$. The
classic version, due to Peskine and Szpiro, \cite{CPsLSz74} asserts
that if $\hh(F)$ is non-zero and each homology module $\hh_i(F)$ is of
finite length, then $n \geq \dim R$ holds.  The statement known as the
Improved New Intersection Theorem was first established within the
proof of Evans and Griffith's Syzygy Theorem \cite{GEEPGr81}. Hochster
states it in \cite{MHc83} as follows:
\begin{enumerate}
\item If the homology modules $\hh_i(F)$ for $i >0$ have finite length
  and a nonzero minimal generator of $\hh_0(F)$ generates a submodule
  of finite length, then $n \geq \dim R$ holds.
\end{enumerate}
A slightly stronger statement was obtained by Iyengar \cite[Theorem
3.1]{SIn99}:
\begin{enumerate}\setcounter{enumi}{1}
\item If the modules $\hh_i(F)$ for $i >0$ are of finite length and an
  ideal $I$ annihilates a nonzero minimal generator of $\hh_0(F)$,
  then $n \geq \dim R - \dim R/I$ holds.
\end{enumerate}
We notice that under the assumptions in $(1)$, some power of $\m$
annihilates $\hh_i(F)$ for all $i > 0$ as well as a minimal generator
of $\hh_0(F)$. The original statements in
\cite{GEEPGr81,SIn99,CPsLSz74} were made for equicharacteristic
rings. The New Intersection Theorem was proved in mixed
characteristics by Roberts \cite{rob} and, through the work of Andr\'e
\cite{YAn18}, the remaining statements are now also known to hold for
all local rings.

The original Improved New Intersection Theorem was generalized by
Avramov, Iyengar, and Neeman \cite{AIN-18} as follows:
\begin{enumerate}\setcounter{enumi}{2}
\item If an ideal $I$ annihilates the homology modules $\hh_i(F)$ for
  $i >0$ as well as a nonzero minimal generator of $\hh_0(M)$, then
  $n \geq \height I$ holds.
\end{enumerate}
One always has $\dim R - \dim R/I \geq \height I$, so the bound in (3)
is weaker than the bound in (2), but so are the assumptions on
$\hh(F)$. The main result of this paper is a common generalization of
these last two statements:
\begin{enumerate}\setcounter{enumi}{3}
\item If an ideal $I$ annihilates the homology modules $\hh_i(F)$ for
  $i >0$ as well as a nonzero minimal generator of $\hh_0(F)$, then
  $n \geq \dim R - \dim R/I$ holds.
\end{enumerate}
If the homology modules $\hh_i(F)$ for $i > 0$ are $I$-torsion, then
they are all annihilated by some fixed power $I^n$ and one has
$\dim R/I^n = \dim R/I$, so (4) is equivalent to the statement made in
the abstract. Finally, we notice that for an $\m$-primary ideal $I$
the statements $(2)$--$(4)$ reduce to the original statement $(1)$.

The work of Andr\'e mentioned above proved the existence of big
Cohen-Macaulay modules over any local ring, and it had already been
established that the existence of such modules was sufficient to prove
the Improved New Intersection Theorem, see Hochster \cite{MHc75a} and
Iyengar \cite{SIn99}. The proof of our main result, which is
\Cref{thm:minit}, is inspired by a more recent proof of $(2)$ by
Iyengar, Ma, Schwede, and Walker \cite{IMSW-22}. Our twist comes down
to controlling the depth of derived $\m$-complete complexes.
 
\section{Derived complete complexes}
\noindent
Throughout the paper, $R$ is a commutative noetherian local ring with
unique maximal ideal $\m$ and residue field $k$. For a finitely
generated $R$-module $M$ and a prime ideal $\p$, Bass' \cite[Lemma
(3.1)]{HBs63} yields the inequality
$\depth_R M \le \depth_{R_\p}M_\p + \dim R/\p$. The main result of
this section, which is key to our proof of Theorem \ref{thm:minit}, is
that the same inequality holds for derived $\m$-complete
$R$-complexes.

\begin{chunk}
  We use homological notation, i.e.\ lower indexing, for
  $R$-complexes. For an $R$-complex $M$, the homological supremum and
  infimum are
  \begin{equation*}
    \sup M = \sup\{n \in \ZZ \mid \hh_n(M) \ne 0\} \quad\text{and}\quad 
    \inf M = \inf\{n \in \ZZ \mid \hh_n(M) \ne 0\} \:.
  \end{equation*}
\end{chunk}

\begin{chunk}\label{chk:Prelim}
  Let $I$ be an ideal in $R$. As is standard we denote the right
  derived $I$-torsion functor by $\rgam_I$ and the left derived
  $I$-completion functor by $\llam^I$. They are adjoint functors, see
  Alonso Tarr\'io, Jerem\'ias Lopez, and Lipman \cite[Theorem
  (0.3)]{AJL-97}, and an $R$-complex $M$ is called derived $I$-torsion
  or derived $I$-complete if it is isomorphic in the derived category
  to $\rgam_I(M)$ or $\llam^I(M)$, respectively. For an $R$-complex
  $M$ the vanishing of local (co)homology, i.e.\ $\hh(\rgam_I{M})$ and
  $\hh(\llam^I{M})$ detects, or if one wishes defines, the depth and
  width invariants relative to $I$:
  \begin{gather}
    \depth_R(I,M) = -\sup\RHom_R(R/I,M) = -\sup{\rgam_I(M)}\label{eq:SupRGam} \\
    \width_R(I,M) = \inf(R/I\lotimes_RM) =
    \inf{\llam^I(M)}\label{eq:InfLLam} \:;
  \end{gather}
  see Foxby and Iyengar \cite[Theorem 2.1 and Theorem
  4.1]{HBFSIn03}. From these equalities and standard inequalities
  regarding homological suprema and infima, see Foxby \cite[Lemma
  2.1]{HBF77b}, one gets:
  \begin{align}
    \depth(I,M) \geq -\sup{M} \text{ with equality if and only if } \Gamma_I(\hh_{\sup{M}}(M)) \ne 0 \:.\label{eq:IDepth}\\
    \width(I,M) \geq \inf{M} \text{ with equality if and only if } \Lambda^I(\hh_{\inf{M}}(M)) \ne 0 \:.
  \end{align}
  Vanishing of local cohomology supported at the maximal ideal also
  detects the dimension of a finitely generated $R$-module:
  \begin{equation}\label{eq:InfRGam}
    \dim_RM = -\inf\rgam_\m(M) \:.
  \end{equation}
\end{chunk}

The next lemma is folklore---Foxby and Iyengar allude to it in the
text preceding \cite[Proposition 2.2]{HBFSIn03}---but we didn't find a
reference to cite.

\begin{lemma}
  \label{lem:ex14.3.3}
  Let $I$ be an ideal in $R$ and $M$ and $N$ be $R$-complexes. If $M$
  is derived $I$-torsion with $\hh{(M)}$ nonzero and bounded below,
  then the next inequalities hold
  \begin{align}
    \tag{a}
    \inf(M\lotimes_RN) & \geq \inf M+ \width_R(I,N) \:.\\
    \tag{b}
    -\sup\RHom_R(M,N) & \geq \inf{M} + \depth_R(I,N) \:.
  \end{align}
\end{lemma}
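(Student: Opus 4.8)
The plan is to deduce both inequalities, uniformly, from two standard estimates on homological suprema and infima, after replacing $N$ by its derived $I$-completion in part (a) and by its derived $I$-torsion in part (b). The two replacements I will establish first are
\begin{align*}
  \text{(i)}\quad M\lotimes_R N \;\simeq\; M\lotimes_R\llam^I N
  \qquad\text{and}\qquad
  \text{(ii)}\quad \RHom_R(M,N) \;\simeq\; \RHom_R(M,\rgam_I N),
\end{align*}
valid for \emph{every} derived $I$-torsion complex $M$ (no boundedness is needed for these). Granting (i) and (ii), part (a) follows from
$\inf(M\lotimes_R N) = \inf(M\lotimes_R\llam^I N) \geq \inf M + \inf\llam^I N = \inf M + \width_R(I,N)$,
where the inequality is the standard estimate $\inf(X\lotimes_R Y)\geq\inf X+\inf Y$ and the last equality is \cref{eq:InfLLam}; and part (b) follows from
$\sup\RHom_R(M,N) = \sup\RHom_R(M,\rgam_I N) \leq \sup\rgam_I N - \inf M = -\depth_R(I,N)-\inf M$,
where the inequality is the standard estimate $\sup\RHom_R(X,Y)\leq\sup Y-\inf X$ and the middle equality is \cref{eq:SupRGam}; rearranging gives (b). Both estimates---see \cite[Lemma 2.1]{HBF77b}---require only that $\inf M$ be an integer, which is exactly what the hypothesis that $\hh(M)$ is nonzero and bounded below provides, and the degenerate cases in which $\width_R(I,N)$ or $\depth_R(I,N)$ equals $\pm\infty$ are immediate.

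For (i): the completion map $N\to\llam^I N$ induces a morphism $M\lotimes_R N\to M\lotimes_R\llam^I N$ whose mapping cone is $M\lotimes_R C$, where $C$ is the cone of $N\to\llam^I N$. Applying $\rgam_I$ to the completion map yields an isomorphism $\rgam_I N\xrightarrow{\ \sim\ }\rgam_I\llam^I N$---one of the basic compatibilities of the Greenlees--May/MGM adjunction, see \cite{AJL-97}---so $\rgam_I C\simeq 0$. Since $\rgam_I$ is modelled by tensoring with the extended \v{C}ech complex on a finite generating set of $I$, a bounded complex of flat modules, $\rgam_I$ commutes with $\lotimes_R$ in each variable; hence, using $M\simeq\rgam_I M$,
\begin{equation*}
  M\lotimes_R C \;\simeq\; (\rgam_I M)\lotimes_R C \;\simeq\; \rgam_I(M\lotimes_R C) \;\simeq\; M\lotimes_R\rgam_I C \;\simeq\; 0,
\end{equation*}
so the morphism above is an isomorphism. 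The argument for (ii) is dual: the torsion map $\rgam_I N\to N$ induces a morphism $\RHom_R(M,\rgam_I N)\to\RHom_R(M,N)$ fitting in a triangle whose third term is $\RHom_R(M,D)$, where $D$ is the cone of $\rgam_I N\to N$; applying $\llam^I$ to the torsion map gives $\llam^I D\simeq 0$, and then, using $M\simeq\rgam_I M$ and the adjunction $\RHom_R(\rgam_I M,-)\simeq\RHom_R(M,\llam^I(-))$,
\begin{equation*}
  \RHom_R(M,D) \;\simeq\; \RHom_R(\rgam_I M,D) \;\simeq\; \RHom_R(M,\llam^I D) \;\simeq\; 0.
\end{equation*}

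The only step requiring real care is the bookkeeping with the MGM package: that $\rgam_I$ commutes with derived tensor products, and that $\rgam_I$ and $\llam^I$ carry the completion map and the torsion map, respectively, to isomorphisms. All of this is routine once $\rgam_I$ is represented by the \v{C}ech complex and $\llam^I$ by $\RHom_R$ out of it, and it is exactly the folklore alluded to before \cite[Proposition 2.2]{HBFSIn03}; everything else is formal. As an alternative route to (a) one could instead argue by d\'evissage---reduce, via truncating $M$ from below and passing to a homotopy colimit, to the case where $M$ is a single nonzero $I$-torsion module $T$; write $T$ as a filtered colimit of $R/I^{n}$-modules, each a finite iterated extension of $R/I$-modules; and invoke the base-change isomorphism $W\lotimes_R N\simeq W\lotimes_{R/I}(R/I\lotimes_R N)$ together with $\inf\bigl(W\lotimes_{R/I}(-)\bigr)\geq\inf(-)$ for an $R/I$-module $W$---but the approach above is shorter and treats (a) and (b) symmetrically.
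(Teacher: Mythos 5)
Your proof is correct and follows essentially the same approach as the paper: both reduce to the isomorphisms $M\lotimes_R N \simeq M\lotimes_R\llam^I N$ and $\RHom_R(M,N)\simeq\RHom_R(M,\rgam_I N)$, established from the same MGM/AJL toolkit ($\rgam_I$ commuting with $\lotimes$, the adjunction $\rgam_I\dashv\llam^I$, and $\rgam_I\llam^I\simeq\rgam_I$, $\llam^I\rgam_I\simeq\llam^I$), and then apply the same standard estimates from \cite[Lemma 2.1]{HBF77b} together with \cref{eq:InfLLam} and \cref{eq:SupRGam}. The only difference is cosmetic: you prove the two isomorphisms by a mapping-cone argument, whereas the paper writes out a chain of five isomorphisms.
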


\begin{proof}
  To prove the inequality (a), we first observe that there are
  isomorphisms in the derived category as follows:
  \begin{align*}
    M\lotimes_RN &\simeq \rgam_I M\lotimes_RN \\
                 &\simeq M\lotimes_R\rgam_I N \\
                 &\simeq M\lotimes_R\rgam_I\llam^IN \\
                 &\simeq\rgam_IM\lotimes_R\llam^IN \\
                 &\simeq M\lotimes_R\llam^IN \:.
  \end{align*}
  Indeed, the first and last isomorphisms hold as $M$ is derived
  $I$-torsion, the second and fourth isomorphism follow from
  \cite[(2.1))]{AJL-97}, and the third isomorphism follows from
  \cite[Corollary (5.1.1)]{AJL-97}.  This justifies the first equality
  in the following chain of (in)equalities
  \begin{align*}
    \inf(M\lotimes_RN)&=\inf(M\lotimes_R\llam^I N) \\
                      &\geq \inf M+\inf\llam^I N \\
                      &=\inf M+\width_R(I,N) \:;
  \end{align*}
  here the inequality holds by \cite[Lemma 2.1]{HBF77b} and
  \eqref{eq:InfLLam} yields the last equality.

  To prove the inequality (b), we first observe that the following
  chain of isomorphisms in the derived category holds:
  \begin{align*}
    \RHom_R(M,N)&\simeq\RHom_R(\rgam_I M,N) \\
                &\simeq\RHom_R(M,\llam^I N) \\
                &\simeq\RHom_R(M,\llam^I\rgam_I N) \\
                &\simeq\RHom_R(\rgam_I M,\rgam_I N) \\
                &\simeq\RHom_R(M,\rgam_I N) \:.
  \end{align*}
  Indeed, the first and last isomorphisms hold as $M$ is derived
  $I$-torsion, the second and fourth isomorphisms follows from the
  fact that $\rgam_I$ and $\llam^I$ are adjoint functors, see
  \cite[Theorem (0.3)]{AJL-97}, and the third isomorphism follows from
  \cite[Corollary (5.1.1)]{AJL-97}.  This explains the first equality
  in the following chain of (in)equalities:
  \begin{align*}
    -\sup\RHom_R(M,N)&=-\sup\RHom_R(M,\rgam_I N) \\
                     &\geq \inf M-\sup\rgam_I N \\
                     &=\inf M + \depth_R(I,N) \:;
  \end{align*}
  here the inequality holds by \cite[Lemma 2.1]{HBF77b}, and
  \eqref{eq:InfLLam} yields the last equality.
\end{proof}

\begin{theorem}\label{prop:bass}
  Let $M$ be a derived $\m$-complete $R$-complex. For every prime
  ideal $\p$ in $R$ there is an inequality
  \begin{equation*}
    \depth_RM\leq\depth_{R_\p}M_\p+\dim R/\p.
  \end{equation*}
\end{theorem}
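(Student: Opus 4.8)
The plan is to reduce the derived $\m$-complete case to Bass's inequality for finitely generated modules by means of a suitable ``truncation'' or ``approximation'' of $M$ by a finite complex, using \Cref{lem:ex14.3.3} to keep track of depth along the way. Fix a prime $\p$ and set $d = \dim R/\p$ and $s = \depth_{R_\p} M_\p$. The goal is $\depth_R M \le s + d$, i.e.\ $\depth_R(\m, M) \le s + d$. The first step is to dispose of the trivial cases: if $\hh(M) = 0$ there is nothing to prove, and if $s = \infty$ (i.e.\ $M_\p$ is exact) then, since $M$ is derived $\m$-complete and hence derived $\p$-... — actually the cleanest reduction is: replace $M$ by a semifree replacement and note $\depth_R(\m,M)$ depends only on the quasi-isomorphism class, and that derived $\m$-completeness is a derived-category condition, so we may work up to isomorphism in $\mathsf{D}(R)$.

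The main idea I would pursue: since $M$ is derived $\m$-complete, the equality $M \simeq \llam^\m(M)$ holds, and more importantly for any $R$-complex one has by \eqref{eq:SupRGam} that $\depth_R(\m, M) = -\sup \rgam_\m(M)$, while $\rgam_\m$ can be computed via a Koszul/Čech complex on a system of parameters. I would choose elements $x_1, \dots, x_d \in \m$ whose images form a system of parameters for $R/\p$ — equivalently, $x_1, \dots, x_d$ map to a s.o.p.\ in $R/\p$ so that $\height((\p + (x_1, \dots, x_d))/\p)$... — and set $\a = \p + (\underline{x})$, an $\m$-primary-up-to-radical ideal in the sense that $\sqrt{\a} = \m$? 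No: $\dim R/\a = 0$ precisely because the $x_i$ kill the dimension of $R/\p$. Then $\rgam_\m(M) \simeq \rgam_\a(M) \simeq \rgam_{(\underline{x})} \rgam_\p(M)$, so $\depth_R(\m, M) = -\sup \rgam_{(\underline{x})}(\rgam_\p M) \le d - \sup(\rgam_\p M) = d + \depth_R(\p, M)$, where the middle inequality is the standard fact that tensoring with a length-$d$ Koszul-type complex raises supremum by at most $d$. Thus the problem reduces to showing $\depth_R(\p, M) \le s = \depth_{R_\p} M_\p = \depth_{R_\p}(\p R_\p, M_\p)$.

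So the crux is the inequality $\depth_R(\p, M) \le \depth_{R_\p}(\p R_\p, M_\p)$ for a derived $\m$-complete complex. For finitely generated modules this is immediate because $\rgam_\p(M)_\p \simeq \rgam_{\p R_\p}(M_\p)$ and localization is exact; the subtlety here is that $M$ need not be finitely generated, and $\rgam_\p$ need not commute with this localization for unbounded or non-finite complexes — but in fact $\rgam_\p$ \emph{does} commute with localization at any prime quite generally (it is computed by a fixed Čech complex on generators of $\p$, which is a bounded complex of flat modules, and tensoring with $R_\p$ commutes with it), so $\hh(\rgam_\p M)_\p \cong \hh(\rgam_{\p R_\p} M_\p)$, giving $-\sup \rgam_{\p R_\p}(M_\p) \ge -\sup(\rgam_\p M)$ provided the relevant top homology survives localization; the point where derived $\m$-completeness must enter is exactly here — to guarantee that the top nonvanishing local cohomology $\hh_{-\depth_R(\p,M)}(\rgam_\p M)$ does not die after localizing at $\p$. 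I expect this to be the main obstacle: a derived $\m$-complete complex has $\hh(M)$ that need not be finitely generated, but it is $\m$-adically separated in a derived sense, and I would argue that a nonzero derived $\m$-complete module cannot be killed by inverting elements of $\m$; more precisely, I would apply \Cref{lem:ex14.3.3}(b) with $I = \m$ — taking $M$ there to be $\RHom_R(R/\p, M)$ or a Koszul complex on generators of $\p$, both of which are derived $\m$-complete — to bound $-\sup \RHom_R(K, M)$ from below and then localize. The remaining work is to assemble these pieces carefully and check the edge cases ($\depth = \infty$, $\hh(M)$ not bounded below), where one may need to invoke that over a local ring a derived $\m$-complete complex with nonzero homology has finite depth, which itself follows from \eqref{eq:IDepth} and derived Nakayama.
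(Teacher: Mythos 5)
Your proposal identifies the right ingredients---the reduction to $\depth_R(\p,M)$, the role of \Cref{lem:ex14.3.3}, and the fact that derived $\m$-completeness enters via $M\simeq\llam^\m M$---but it misplaces where the completeness is actually needed and, at the crucial step, relies on an inequality that is false.

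The inequality $\depth_R(\p,M)\leq\depth_{R_\p}M_\p$ holds for \emph{every} $R$-complex $M$: by \cite[Proposition 2.10]{HBFSIn03} one has $\depth_R(\p,M)=\inf\{\depth_{R_\q}M_\q \mid \q\in\V(\p)\}$, so in particular $\depth_R(\p,M)\leq\depth_{R_\p}M_\p$. You spend most of the sketch worrying that derived $\m$-completeness is needed to ensure ``the top local cohomology survives localization,'' but that is not where the hypothesis is used. The genuinely delicate step is the one you dispatch in a single line, namely
\begin{equation*}
  \depth_R M=\depth_R(\m,M)\leq\dim R/\p+\depth_R(\p,M) \:.
\end{equation*}
Your argument for it---factor $\rgam_\m\simeq\rgam_{(\underline{x})}\rgam_\p$ and appeal to ``tensoring with a length-$d$ Koszul-type complex lowers $\sup$ by at most $d$''---does not go through. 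Tensoring with the \v{C}ech complex on $\underline{x}$ (a bounded complex of flat modules in homological degrees $-d,\ldots,0$) gives an \emph{upper} bound $\sup(\check{C}(\underline{x})\otimes_R N)\leq\sup N$, not the lower bound $\sup(\check{C}(\underline{x})\otimes_R N)\geq\sup N-d$ that you need; indeed the tensor product can be exact (take $N$ supported off $\V(\underline{x})$), so the claimed inequality fails for general $M$. This is exactly where derived $\m$-completeness has to do work, and the factorization through $\rgam_\p$ does not make that visible.

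The paper's route is different and avoids this issue. Since $M\simeq\llam^\m M$, adjunction of $\rgam_\m$ and $\llam^\m$ gives $\RHom_R(R/\p,M)\simeq\RHom_R(\rgam_\m(R/\p),M)$. Now $\rgam_\m(R/\p)$ is derived $\m$-torsion with bounded-below nonzero homology and $\inf\rgam_\m(R/\p)=-\dim R/\p$ by \eqref{eq:InfRGam}, so \Cref{lem:ex14.3.3}(b) applies with this complex in the first slot and $I=\m$, yielding $\depth_R(\p,M)\geq\depth_R M-\dim R/\p$. Combined with the general inequality above, that is the theorem. Note that the objects you propose to feed into \Cref{lem:ex14.3.3}(b)---$\RHom_R(R/\p,M)$ or a Koszul complex on generators of $\p$---are not derived $\m$-torsion when $\p\neq\m$, so the lemma would not apply to them; the correct choice is $\rgam_\m(R/\p)$, and it is reached precisely through the adjunction step that your sketch does not make.
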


\begin{proof}
  The claim follows from the following chain of (in)equalities
  \begin{align*}
    \depth_{R_\p}M_\p &\geq \depth_R(\p,M) \\
                      &=-\sup\RHom_R(R/\p,M)\\
                      &=-\sup\RHom_R(R/\p,\llam^\m M) \\
                      &=-\sup\RHom_R(\rgam_\m (R/\p),M) \\
                      &\geq \inf\rgam_\m (R/\p) + \depth_RM \\
                      &=\depth_RM-\dim R/\p \:,
  \end{align*}
  where the first inequality holds by \cite[Proposition
  2.10]{HBFSIn03}, the first equality is part of \Cref{eq:SupRGam},
  the second equality follows from the hypothesis that $M$ is derived
  $\m$-complete, the third equality holds as $\llam$ and $\rgam$ are
  adjoint functors, the last inequality holds by \Cref{lem:ex14.3.3},
  and \eqref{eq:InfRGam} yields the last equality.
\end{proof}

\begin{corollary}
  \label{cor:bass}
  Let $M$ be a derived $\m$-complete $R$-complex. For every ideal $I$
  in $R$ there is an inequality
  \begin{equation*}
    \depth_RM\leq\depth_R(I,M)+\dim R/I \:.
  \end{equation*}
\end{corollary}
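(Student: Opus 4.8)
The plan is to prove \Cref{cor:bass} by transcribing the proof of \Cref{prop:bass}, with the cyclic module $R/\p$ replaced by $R/I$; the chain of (in)equalities even becomes one step shorter, since no localization is involved. One may assume that $I$ is a proper ideal, for if $I=R$ then $\depth_R(R,M) = -\sup\RHom_R(0,M) = +\infty$ and the asserted inequality holds trivially.

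The complex $\rgam_\m(R/I)$ takes over the role played by $\rgam_\m(R/\p)$ in the proof of \Cref{prop:bass}, so the first thing to check is that it satisfies the hypotheses of \Cref{lem:ex14.3.3}. It is derived $\m$-torsion by construction. Its homology is $\hh_n(\rgam_\m(R/I)) = \hh^{-n}_\m(R/I)$, which vanishes outside the range $-\dim R/I \le n \le 0$, hence is bounded; and it is nonzero, because $\hh^{\dim R/I}_\m(R/I)\ne 0$ by Grothendieck's non-vanishing theorem --- this is where properness of $I$ enters. Furthermore \eqref{eq:InfRGam} identifies $\inf\rgam_\m(R/I) = -\dim R/I$.

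With this in place the computation is routine. Starting from \Cref{eq:SupRGam}, using that $M\simeq\llam^\m M$ because $M$ is derived $\m$-complete, and then transferring $\rgam_\m$ onto the first argument via the adjunction between $\rgam_\m$ and $\llam^\m$, one gets
\begin{equation*}
  \depth_R(I,M) = -\sup\RHom_R(R/I,M) = -\sup\RHom_R(R/I,\llam^\m M) = -\sup\RHom_R(\rgam_\m(R/I),M) \:.
\end{equation*}
Applying \Cref{lem:ex14.3.3}(b) to the complexes $\rgam_\m(R/I)$ and $M$ with the ideal $\m$, and using that $\depth_R(\m,M)=\depth_RM$ together with the value of $\inf\rgam_\m(R/I)$ found above, one obtains
\begin{equation*}
  \depth_R(I,M) \ge \inf\rgam_\m(R/I) + \depth_R(\m,M) = \depth_RM - \dim R/I \:;
\end{equation*}
rearranging this gives the claim.

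The only step that genuinely calls for care is the verification that $\rgam_\m(R/I)$ has nonzero and bounded-below homology, so that \Cref{lem:ex14.3.3}(b) is applicable, together with the trivial but necessary handling of the case $I=R$; everything else copies the proof of \Cref{prop:bass} line by line. An alternative would be to deduce \Cref{cor:bass} directly from \Cref{prop:bass} by choosing a prime $\p\supseteq I$ with $\depth_{R_\p}M_\p=\depth_R(I,M)$ and observing that $\dim R/\p\le\dim R/I$; but exhibiting such a prime for an arbitrary $R$-complex is precisely the sort of statement the argument above sidesteps, so I would not pursue it.
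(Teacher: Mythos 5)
Your proof is correct, and it takes a genuinely different route from the paper's. The paper proves \Cref{cor:bass} as a two-line consequence of \Cref{prop:bass} --- precisely the ``alternative'' you dismiss at the end. It cites \cite[Proposition 2.10]{HBFSIn03}, which gives $\depth_R(I,M)=\inf\{\depth_{R_\p}M_\p\mid \p\in \V(I)\}$, picks a prime $\p\in\V(I)$ attaining the infimum, and combines \Cref{prop:bass} with $\dim R/\p\le\dim R/I$. Your worry about exhibiting such a prime is a bit hasty: depth takes values in $\ZZ\cup\{\pm\infty\}$, so the infimum is attained whenever it is finite, and the degenerate cases make the inequality trivial.

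What you have done instead is re-run the proof of \Cref{prop:bass} with $R/I$ in place of $R/\p$, dropping the initial localization step. The verifications you single out --- that $\rgam_\m(R/I)$ is derived $\m$-torsion with bounded, nonzero homology so that \Cref{lem:ex14.3.3}(b) applies, that $\inf\rgam_\m(R/I)=-\dim R/I$ by \eqref{eq:InfRGam}, and the separate handling of $I=R$ --- are exactly the right points of care, and the computation is correct. This arrangement makes \Cref{cor:bass} self-supporting, and indeed \Cref{prop:bass} would then follow from it by specializing to $I=\p$ and recalling the inequality $\depth_R(\p,M)\le\depth_{R_\p}M_\p$, which is the very first step of the paper's own proof of \Cref{prop:bass}. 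The paper chose to prove the prime case and deduce the general ideal case; you have done the reverse. Both are sound.
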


\begin{proof}
  From \cite[Proposition 2.10]{HBFSIn03} one gets
  \begin{equation*}
    \depth_R(I,M)=\inf\{\depth_{R_\p}M_\p\mid \p\in \V(I)\} \:.
  \end{equation*}
  Therefore, $\depth_R(I,M)=\depth_{R_\p}M_\p$ holds for some
  $\p\in \V(I)$, and now the asserted inequality follows from
  \Cref{prop:bass} as $\dim R/\p \le \dim R/I$ holds.
\end{proof}

Notice that for a prime ideal $I$ the inequality in \Cref{cor:bass}
may be stronger than the inequality in \Cref{prop:bass}.

\section{An Improved New Intersection Theorem}
\label{sec:main}

\noindent We now get to the main result of the paper.

\begin{chunk}
  We recall from \cite{IMSW-22} that an $R$-complex of maximal depth
  is a complex $M$ satisfying the following three conditions:
  \begin{enumerate}
  \item $\hh(M)$ is bounded;
  \item The canonical map $\hh_0(M)\rightarrow\hh_0(k\lotimes_RM)$ is
    nonzero;
  \item $\depth_RM=\dim R$.
  \end{enumerate}
\end{chunk}

The obvious example of a complex of maximal depth is a big
Cohen-Macaulay module, and such modules exist over every local
ring. The interest in complexes derives from the fact that homological
conjectures---The Canonical Element Conjecture to be specific---in the
presence of a dualizing complex implies the existence of complexes of
maximal depth with degreewise finitely generated homology, see
\cite[Remarks 4.7 and 4.15]{IMSW-22}.

\begin{theorem}
  \label{thm:minit}
  Let $I$ be an ideal in $R$ and
  \begin{equation*}
    F\colon0\longrightarrow F_n\longrightarrow\cdots\longrightarrow
    F_1 \longrightarrow F_0\longrightarrow 0
  \end{equation*}
  a finite free $R$-complex with $\hh_0(F) \ne 0$. If $\hh_i(F)$ is
  $I$-torsion for $i>0$ and a minimal generator of $\hh_0(F)$ is
  $I$-torsion, then $n\geq \dim R-\dim R/I$ holds.
\end{theorem}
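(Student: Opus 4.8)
The plan is to reduce the theorem to the existence of a complex of maximal depth and then exploit the depth bound for derived $\m$-complete complexes established in \Cref{prop:bass} (or rather \Cref{cor:bass}). By \cite[Remarks 4.7 and 4.15]{IMSW-22}---or by the existence of big Cohen-Macaulay modules, which is now known over every local ring by Andr\'e's work---we may fix an $R$-complex $M$ of maximal depth; after derived $\m$-completion we may assume $M$ is derived $\m$-complete, noting that completion does not decrease depth and preserves the nonvanishing of $\hh_0(M)\to\hh_0(k\lotimes_RM)$ (this is the point where the "twist" advertised in the introduction enters). The strategy is then to compare $M$ with $\RHom_R(F,M)$, or equivalently to run the argument of \cite{IMSW-22} for statement $(2)$ but with the sharper input that $M$ is derived $\m$-complete so that \Cref{cor:bass} is available in place of the naive inequality $\depth_R M \le \depth_R(I,M) + \height I$.

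Concretely, I would consider the complex $F\lotimes_R M$, or dually $\RHom_R(F,M)$. First one records that since $F$ is a finite free complex concentrated in degrees $0,\dots,n$, one has $\inf\bigl(F\lotimes_R M\bigr)\ge -n + \inf M$ and more importantly, using condition $(2)$ of maximal depth, that the homology of $F\lotimes_R M$ in the appropriate extreme degree is nonzero---this is the usual "nonvanishing of the top (or bottom) homology of the tensor product with a complex of maximal depth" computation, tracking the minimal generator of $\hh_0(F)$ against the map $\hh_0(M)\to\hh_0(k\lotimes_R M)$. Next, the $I$-torsion hypothesis on $\hh_i(F)$ for $i>0$ together with the $I$-torsion hypothesis on a minimal generator of $\hh_0(F)$ should be packaged so that the relevant piece of $F\lotimes_R M$ is derived $I$-torsion in the sense needed for \Cref{lem:ex14.3.3}; applying that lemma (part (a) or (b) as appropriate) bounds $\inf(F\lotimes_R M)$ or $-\sup\RHom_R(F,M)$ below by something of the form $-n + \depth_R(I,M)$ or $-n + \width_R(I,M)$. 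Finally, one invokes $\depth_R M = \dim R$ together with \Cref{cor:bass}, which gives $\dim R = \depth_R M \le \depth_R(I,M) + \dim R/I$; combining this with the displayed inequality forces $n \ge \dim R - \dim R/I$.

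The main obstacle, I expect, is the bookkeeping that makes precise the sense in which the $I$-torsion conditions on $\hh(F)$ let one treat $F\lotimes_R M$ (or a truncation thereof) as a derived $I$-torsion complex with nonzero, bounded-below homology, so that \Cref{lem:ex14.3.3} applies with the correct extreme degree---in particular, ensuring that the "$I$-torsion minimal generator of $\hh_0(F)$" hypothesis is exactly what survives after tensoring with $M$ and detecting homology via condition $(2)$ of maximal depth. A secondary technical point is checking that derived $\m$-completion of the complex of maximal depth $M$ genuinely preserves all three defining conditions (boundedness of homology is the one requiring a little care, since $\llam^\m$ need not preserve boundedness in general, though for $M$ with degreewise finitely generated homology it does). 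Once these two points are in place, the chain of inequalities closes formally, exactly as the analogous argument in \cite{IMSW-22} does for statement $(2)$, with \Cref{cor:bass} supplying the crucial improvement from $\height I$ to $\dim R - \dim R/I$.
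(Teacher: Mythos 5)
Your overall strategy---fix a derived $\m$-complete complex $M$ of maximal depth, tensor it with $F$, and close with \Cref{cor:bass} and Auslander--Buchsbaum---is the right one and matches the paper's approach. But the specific mechanism you propose for injecting the $I$-torsion hypotheses has a genuine gap, and it is the very point you flag as the ``main obstacle.'' You want to package the hypotheses so that $F\lotimes_RM$ (or a truncation) is derived $I$-torsion and then invoke \Cref{lem:ex14.3.3}. This cannot work: only a single minimal generator of $\hh_0(F)$ is assumed $I$-torsion, not the module $\hh_0(F)$ itself, so $F$ is not derived $I$-torsion, and neither is $F\lotimes_RM$; and truncating away degree $0$ discards exactly the information the hypothesis is about. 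The paper never applies \Cref{lem:ex14.3.3} in the proof of \Cref{thm:minit} at all---that lemma is used only to establish \Cref{prop:bass}.

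What the paper does instead is finer and involves a case split on $s=\sup(F\otimes_RM)$, which your sketch omits entirely. When $s=0$, one does not need derived $I$-torsion: the $I$-torsion minimal generator of $\hh_0(F)$ survives to a nonzero $I$-torsion element of $\hh_0(F\otimes_RM)$, giving $\Gamma_I(\hh_0(F\otimes_RM))\ne0$, and \eqref{eq:IDepth} then yields the equality $\depth_R(I,F\otimes_RM)=-s=0$; \Cref{cor:bass} and Auslander--Buchsbaum finish. When $s\ge1$, this route is unavailable because the top homology sits in degree $s>0$ and the degree-$0$ generator says nothing about $\Gamma_I(\hh_s(F\otimes_RM))$. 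The paper handles this case by picking $\p\in\mathrm{Ass}_R\hh_s(F\otimes_RM)$, proving $\pdim_{R_\p}F_\p\ge\dim R-\dim R/\p+s$ via Auslander--Buchsbaum and \Cref{prop:bass}, and then arguing by contradiction that $I\subseteq\p$: if not, then $F_\p\simeq\hh_0(F)_\p$ is concentrated in degree $0$, and Auslander--Buchsbaum over $R_\p$ forces $\depth R_\p\ge\dim R_\p+s$, absurd for $s>0$. Without this case analysis and without replacing ``derived $I$-torsion'' by the much weaker ``$\Gamma_I$ of the top homology is nonzero,'' the chain of inequalities you describe does not close. The secondary point you raise about derived $\m$-completion preserving maximal depth is not an issue in the paper, which simply cites \cite[Lemma 3.4]{IMSW-22} for the existence of a derived $\m$-complete complex of maximal depth.
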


\begin{proof}
  Let $M$ be a derived $\m$-complete complex of maximal depth; such a
  complex exists by \cite[Lemma 3.4]{IMSW-22}. Let $s$ be the integer
  $\sup (F\otimes_RM)$ and notice from \cite[Lemma 3.1]{IMSW-22} that
  one has that $s\geq0$.

  Let $\p$ be in $\mathrm{Ass}_R\mathrm{H}_s(F\otimes_R M)$. It
  follows that $\mathrm{H}(F\otimes_RM)_\p$ is nonzero and, hence,
  $\mathrm{H}(F)_\p$ and $\mathrm{H}(M)_\p$ are nonzero as well. We
  have the following chain of (in)equalities
  \begin{equation}\label{eq:pdim}
    \begin{aligned}
      \pdim_{R_\p}F_\p & = \depth_{R_\p}M_\p-\depth_{R_\p}(F\otimes_RM)_\p \\
      & = \depth_{R_\p}M_\p+s \\
      & \geq \depth_RM-\dim R/\p+s\\
      & = \dim R-\dim R/\p+s \:,
    \end{aligned}
  \end{equation}
  where the first equality is the Auslander-Buchsbaum equality, see
  \cite[Theorem 2.4]{HBFSIn03}, the second equality follows from
  \eqref{eq:IDepth}, the inequality holds by \Cref{prop:bass}, and the
  last equality holds as $M$ is a complex of maximal depth.

  Assume first that $s\geq 1$ holds. In this case it suffices to show
  that $I$ is contained in $\p$ as one then has,
  \begin{equation*}
    n \geq \pdim_RF \geq \pdim_{R_\p}F_\p\geq \dim R-\dim R/\p+s > \dim
    R-\dim R/I \:.
  \end{equation*}
  To see that $\p$ contains $I$, assume towards a contradiction that
  $I\not\subseteq \p$. It follows that $F_\p$ is isomorphic to
  $\mathrm{H}_0(F)_\p$ in the derived category, as $\mathrm{H}_i(F)$
  is $I$-torsion for $i\geq1$ and, therefore, $\sup F_\p=0$. One now
  has the following chain of (in)equalities
  \begin{align*}
    \depth R_\p&=\depth F_\p+\pdim F_\p\\
               &\geq \pdim F_\p\\
               &\geq \dim R-\dim R/\p+s\\
               &\geq \dim R_\p+s \:,
  \end{align*}
  which is absurd as $s$ is positive. The equality in the display
  above is the Auslander-Buchsbaum equality, see \cite[Theorem
  2.4]{HBFSIn03}, the first inequality is trivial,  the second
   follows from \eqref{eq:pdim}, and the last inequality is
  standard.

  It remains to consider the case $s=0$. It follows from the finite
  generation of $\hh_0(F)$, Nakayama's Lemma, and \cite[Lemma
  3.1]{IMSW-22} that each minimal generator of $\hh_0(F)$ gives rise
  to a nonzero element in $\hh_0(F\otimes_R M)$. Thus, by hypothesis,
  there is an $I$-torsion element of $\hh_0(F\otimes_RM)$, i.e.\
  $\Gamma_I(\hh_0(F\otimes_RM))\neq0$ and, therefore,
  $$\depth_R(I,F\otimes_RM)=-\sup(F\otimes_RM)=-s=0$$     by \eqref{eq:IDepth}. By \cite[(2.2)]{IMSW-22}, the complex $F\otimes_RM$ is derived $\m$-complete, therefore, applying
  \Cref{cor:bass}, one gets
  \begin{equation*}
    \depth_R(F\otimes_RM) \leq \dim R/I \:.
  \end{equation*}
  It remains to apply the Auslander-Buchsbaum equality:
  \begin{equation*}
    \pdim_RF = \depth_RM-\depth_R(F\otimes_RM)\geq \dim R-\dim R/I
    \:. \qedhere
  \end{equation*}
\end{proof}
\begin{chunk}
  Recall from Foxby \cite{HBF79} that for an $R$-complex $M$ the small
  support is the set
  $$\supp_RM = \{\p\in\Spec R \mid \hh(M \lotimes_R k(\p)) \ne 0\}
  \:,$$ where $k(\fp)$ denotes the residue field of the local ring
  $R_\p$.
\end{chunk}

\begin{remark}
  \label{rmk:ineq-inf}
  Let $M$ be an $R$-complex with bounded homology and $\p$ a prime
  ideal in $\supp_RM$. There are inequalities,
  \begin{equation}
    \label{eq:ineq-inf}
    \depth_{R_\p}M_\p \le \dim_{R_\p}M_\p \le \dim R_\p - \inf{M_\p} \le  \dim R - \dim R/\p - \inf{M_\p} \:;
  \end{equation}
  indeed, the first inequality holds by \cite[Corollary 3.9]{HBF79},
  the second inequality follows from the definition of dimension of
  complexes, also from \cite{HBF79}, and the third is standard. Thus,
  for a derived $\m$-complete $R$-complex $M$ of maximal depth it
  follows from \Cref{prop:bass} and \eqref{eq:ineq-inf} that the
  inequalities
  \begin{equation*}
    \dim R-\dim R/\p\leq\depth_{R_\p}M_\p\leq\dim R-\dim R/\p-\inf
    M_\p,
  \end{equation*}
  hold for every prime ideal $\p$ in $\supp_RM$.
\end{remark}

\begin{theorem}
  \label{thm:module}
  Let $M$ be a derived $\m$-complete $R$-module of maximal depth. For
  every prime ideal $\p$ in $\supp_RM$ the equality
  $\dim R = \dim R_\fp + \dim R/\fp$ holds and $M_\p$ is an
  $R_\p$-module of maximal depth.
\end{theorem}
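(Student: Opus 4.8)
The plan is to split the statement into its numerical and its homological halves and to dispatch the numerical one first. Fix a prime ideal $\p$ in $\supp_R M$. Since $\hh(M\lotimes_R k(\p))\ne 0$ forces $M_\p\ne 0$, the localization $M_\p$ is a nonzero module, so $\hh(M_\p)$ is bounded and $\inf M_\p=0$. I would then substitute $\inf M_\p=0$ into the inequalities already assembled in \Cref{rmk:ineq-inf}: the bound $\dim R-\dim R/\p\leq\depth_{R_\p}M_\p$ comes from \Cref{prop:bass} together with $\depth_R M=\dim R$, while $\depth_{R_\p}M_\p\leq\dim_{R_\p}M_\p\leq\dim R_\p-\inf M_\p\leq\dim R-\dim R/\p-\inf M_\p$ is \eqref{eq:ineq-inf}. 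Together these give
\[
  \dim R-\dim R/\p\;\leq\;\depth_{R_\p}M_\p\;\leq\;\dim_{R_\p}M_\p\;\leq\;\dim R_\p\;\leq\;\dim R-\dim R/\p \,,
\]
so equality holds everywhere. The outermost equality is the first assertion, $\dim R=\dim R_\p+\dim R/\p$, and in passing I record $\depth_{R_\p}M_\p=\dim_{R_\p}M_\p=\dim R_\p$.

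Next I would verify the three conditions for $M_\p$ to be an $R_\p$-module of maximal depth. Boundedness of $\hh(M_\p)$ and the equality $\depth_{R_\p}M_\p=\dim R_\p$ are already in hand, so the whole burden falls on the remaining condition: that the canonical map $\hh_0(M_\p)\to\hh_0(k(\p)\lotimes_{R_\p}M_\p)$ --- the surjection $M_\p\to M_\p/\p M_\p$ --- is nonzero, equivalently, that $M_\p\ne\p M_\p$. This is the step I expect to be the main obstacle, and it is genuinely not formal: a nonzero non-finitely-generated module over a local ring can have finite depth while being killed modulo the maximal ideal --- the injective hull of the residue field over a discrete valuation ring is an example --- so the mere finiteness of $\depth_{R_\p}M_\p$ does not suffice, and one must exploit the equality $\depth_{R_\p}M_\p=\dim R_\p$.

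To exploit it I would pass to a Koszul complex. Choose a system of parameters $x_1,\dots,x_d$ of $R_\p$, where $d=\dim R_\p$, and let $K$ be its Koszul complex. As $(x_1,\dots,x_d)$ is $\p R_\p$-primary, the Koszul characterization of depth, see \cite{HBFSIn03}, gives $\depth_{R_\p}(\p R_\p,M_\p)=d-\sup(K\otimes_{R_\p}M_\p)$; comparing with $\depth_{R_\p}(\p R_\p,M_\p)=d$ forces $\sup(K\otimes_{R_\p}M_\p)=0$. Now $K\otimes_{R_\p}M_\p$ is concentrated in homological degrees $0,\dots,d$ and is not acyclic --- otherwise its supremum would be $-\infty$ and the depth would be infinite --- so its homology is concentrated in degree $0$, where it equals the nonzero module $M_\p/(x_1,\dots,x_d)M_\p$. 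Finally $R_\p/(x_1,\dots,x_d)$ is artinian local with nilpotent maximal ideal $\p R_\p/(x_1,\dots,x_d)$, so Nakayama's lemma yields $M_\p/\p M_\p\ne 0$, which is the missing condition. Apart from this Koszul computation, the argument is bookkeeping with the inequalities of \Cref{rmk:ineq-inf}.
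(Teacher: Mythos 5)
Your proof is correct and follows essentially the same plan as the paper's: squeeze $\depth_{R_\p}M_\p$ between the bound from \Cref{prop:bass} and the chain \eqref{eq:ineq-inf} to get $\dim R=\dim R_\p+\dim R/\p$ and $\depth_{R_\p}M_\p=\dim R_\p$, then pass to the Koszul complex $K$ on a system of parameters. Where you diverge is the final step: the paper invokes the inequality $\depth_{R_\p}M_\p+\width_{R_\p}M_\p\le d$ (the Bartijn/Strooker observation, via $\depth=d-\sup(K\otimes M_\p)$ and $\width=\inf(K\otimes M_\p)$) to conclude $\width_{R_\p}M_\p=0$ in one line, whereas you argue directly that $\sup(K\otimes_{R_\p}M_\p)=0$ pins the homology to degree $0$, identify $\hh_0(K\otimes M_\p)=M_\p/(\mathbf{x})M_\p\ne 0$, and finish with Nakayama over the artinian ring $R_\p/(\mathbf{x})$, whose maximal ideal is nilpotent so that Nakayama applies without finite generation. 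The two endgames are equivalent --- both amount to showing $\width_{R_\p}M_\p=0$ --- but yours is the more elementary spelling-out, trading the citation to Strooker for a hands-on computation, and it is worth your noting explicitly (as you implicitly do) that the nilpotency of the maximal ideal is what rescues Nakayama for the non-finitely-generated module $M_\p/(\mathbf{x})M_\p$.
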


\begin{proof}
  Let $\p$ be a prime ideal in $\supp_RM$; as $\p$ in particular is in
  the support of $M$, the inequalities \eqref{eq:ineq-inf} read
  $\depth_{R_\p}M_\p \le \dim R_\p \le \dim R - \dim R/\p$. The
  inequality from \Cref{prop:bass} can be rewritten as
  $\dim R - \dim R/\p \le \depth_{R_\p}M_\p$. Combining these
  inequalities one gets the equality $\dim R = \dim R_\p + \dim R/\p$
  as well as $\depth_{R_\p}M_\p = \dim R_\p$. It remains to see that
  $M_\p \otimes_{R_\p} k(\p)$ is non-zero. Set $d = \dim R_\p$ and let
  $K$ be the Koszul complex on a sequence
  $\mathbf{x} = x_1,\ldots,x_d$ of parameters for $R_\fp$. By
  \cite[Definitions 2.3 and 4.3]{HBFSIn03} one has
  $\depth_{R_\p}M_\p = d - \sup{(K \otimes_{R_\p} M_\p)}$ and
  $\width_{R_\p}M_\p = \inf{(K \otimes_{R_\p} M_\p)}$. Therefore, one
  has
  $$\depth_{R_\p} M_\p + \width_{R_\p} M_\p = d - \sup{(K
    \otimes_{R_\p} M_\p)} + \inf{(K \otimes_{R_\p} M_\p)} \le d \:.$$
  This forces $\width_{R_\p}M_\p = 0$, whence
  $M_\p \otimes_{R_\p} k(\p) \ne 0$ by \eqref{eq:InfLLam} as
  desired. (The inequality displayed above is \cite[Corollary
  6.1.10]{JRS90}, and Strooker credits Bartijn with observing it in
  his thesis.)
\end{proof}

\begin{remark}
  The $\m$-adic completion of a big Cohen-Macaulay module is an
  example of a derived $\m$-complete module of maximal depth. Indeed,
  such a module is a balanced big Cohen-Macaulay module, see Bruns and
  Herzog \cite[Corollary 8.5.3]{bruher}, and derived $\m$-complete,
  see for example Schenzel and Simon \cite[Proposition 2.5.7(a),
  Example 7.3.2(d), and Theorem 7.5.13(a)]{PScAMS}. For such modules
  the equality in \Cref{thm:module} was proved by Sharp \cite[Theorem
  3.2]{RYS81}, who called the set $\supp_RM$ the supersupport of $M$.
\end{remark}

\providecommand{\MR}[1]{\mbox{\href{http://www.ams.org/mathscinet-getitem?mr=#1}{#1}}}
\renewcommand{\MR}[1]{\mbox{\href{http://www.ams.org/mathscinet-getitem?mr=#1}{#1}}}
\providecommand{\arxiv}[2][AC]{\mbox{\href{http://arxiv.org/abs/#2}{\sf
      arXiv:#2 [math.#1]}}} \def\cprime{$'$}
\providecommand{\bysame}{\leavevmode\hbox to3em{\hrulefill}\thinspace}
\providecommand{\MR}{\relax\ifhmode\unskip\space\fi MR }
\providecommand{\MRhref}[2]{%
  \href{http://www.ams.org/mathscinet-getitem?mr=#1}{#2} }


\begin{thebibliography}{10}

\bibitem{AJL-97} Leovigildo Alonso~Tarr{\'{\i}}o, Ana
  Jerem{\'{\i}}as~L{\'o}pez, and Joseph Lipman, \emph{Local homology
    and cohomology on schemes}, Ann. Sci. \'Ecole Norm. Sup. (4)
  \textbf{30} (1997), no.~1, 1--39. \MR{MR1422312}

\bibitem{YAn18} Yves Andr\'{e}, \emph{Perfectoid spaces and the
    homological conjectures}, Proceedings of the {I}nternational
  {C}ongress of {M}athematicians---{R}io de {J}aneiro
  2018. {V}ol. {II}. {I}nvited lectures, World Sci. Publ., Hackensack,
  NJ, 2018, pp.~277--289. \MR{MR3966766}

\bibitem{AIN-18} Luchezar~L. Avramov, Srikanth~B. Iyengar, and Amnon
  Neeman, \emph{Big {C}ohen-{M}acaulay modules, morphisms of perfect
    complexes, and intersection theorems in local algebra},
  Doc. Math. \textbf{23} (2018), 1601--1619.  \MR{MR3890961}

\bibitem{HBs63} Hyman Bass, \emph{On the ubiquity of {G}orenstein
    rings}, Math. Z. \textbf{82} (1963), 8--28. \MR{MR0153708}

\bibitem{bruher} Winfried Bruns and J{\"u}rgen Herzog,
  \emph{Cohen-{M}acaulay rings}, Cambridge Studies in Advanced
  Mathematics, vol.~39, Cambridge University Press, Cambridge,
  1993. \MR{MR1251956}

\bibitem{GEEPGr81} E.~Graham Evans and Phillip Griffith, \emph{The
    syzygy problem}, Ann. of Math.  (2) \textbf{114} (1981), no.~2,
  323--333. \MR{MR0632842}

\bibitem{HBF77b} Hans-Bj{\o}rn Foxby, \emph{Isomorphisms between
    complexes with applications to the homological theory of modules},
  Math. Scand. \textbf{40} (1977), no.~1, 5--19. \MR{MR0447269}

\bibitem{HBF79} Hans-Bj{\o}rn Foxby, \emph{Bounded complexes of flat
    modules}, J. Pure Appl. Algebra \textbf{15} (1979), no.~2,
  149--172. \MR{MR0535182}

\bibitem{HBFSIn03} Hans-Bj{\o}rn Foxby and Srikanth Iyengar,
  \emph{Depth and amplitude for unbounded complexes}, Commutative
  algebra ({G}renoble/{L}yon, 2001), Contemp.  Math., vol. 331,
  Amer. Math. Soc., Providence, RI, 2003, pp.~119--137.
  \MR{MR2013162}

\bibitem{MHc75a} Melvin Hochster, \emph{Big {C}ohen-{M}acaulay modules
    and algebras and embeddability in rings of {W}itt vectors},
  Conference on {C}ommutative {A}lgebra--1975 ({Q}ueen's {U}niv.,
  {K}ingston, {O}nt., 1975), 1975, pp.~106--195. Queen's Papers on
  Pure and Applied Math., No. 42.  \MR{MR0396544}

\bibitem{MHc83} Melvin Hochster, \emph{Canonical elements in local
    cohomology modules and the direct summand conjecture}, J. Algebra
  \textbf{84} (1983), no.~2, 503--553.  \MR{MR0723406}

\bibitem{SIn99} Srikanth Iyengar, \emph{Depth for complexes, and
    intersection theorems}, Math.  Z. \textbf{230} (1999), no.~3,
  545--567. \MR{MR1680036}

\bibitem{IMSW-22} Srikanth~B. Iyengar, Linquan Ma, Karl Schwede, and
  Mark~E. Walker, \emph{Maximal {C}ohen-{M}acaulay complexes and their
    uses: a partial survey}, Commutative algebra, Springer, Cham,
  [2021] \copyright 2021, pp.~475--500.  \MR{MR4394418}

\bibitem{CPsLSz74} Christian Peskine and Lucien Szpiro, \emph{Syzygies
    et multiplicit\'{e}s}, C.  R. Acad. Sci. Paris S\'{e}r. A
  \textbf{278} (1974), 1421--1424.  \MR{MR0349659}

\bibitem{rob} Paul~C. Roberts, \emph{Multiplicities and {C}hern
    classes in local algebra}, Cambridge Tracts in Mathematics,
  vol. 133, Cambridge University Press, Cambridge,
  1998. \MR{MR1686450}

\bibitem{PScAMS} Peter Schenzel and Anne-Marie Simon,
  \emph{Completion, \v{C}ech and local homology and cohomology},
  Springer Monographs in Mathematics, Springer, Cham,
  2018. \MR{MR3838396}

\bibitem{RYS81} R.~Y. Sharp, \emph{Cohen-{M}acaulay properties for
    balanced big {C}ohen-{M}acaulay modules}, Math. Proc. Cambridge
  Philos. Soc. \textbf{90} (1981), no.~2, 229--238. \MR{MR0620732}

\bibitem{JRS90} Jan~R. Strooker, \emph{Homological questions in local
    algebra}, London Mathematical Society Lecture Note Series,
  vol. 145, Cambridge University Press, Cambridge,
  1990. \MR{MR1074178}

\end{thebibliography}
\end{document}